\newtheorem{theorem}{Theorem}
\newtheorem{prop}{Proposition}
\newtheorem{cor}{Corollary}
\newtheorem{lemma}{Lemma}
\theoremstyle{definition}
\newtheorem{exmp}{Example}
\newtheorem*{obs}{Reformulation of Stanley's Conjecture}
\begin{document}

\begin{frontmatter}

\title{Transplanting Trees: Chromatic Symmetric Function Results through the Group Algebra of $S_n$}
\runtitle{Transplanting Trees}


\author{\fnms{Ang\`ele M.} \snm{Foley}},
\address{Department of Physics and Computer Science\\
Wilfrid Laurier University\\
 Waterloo, Ontario, Canada}
\author{\fnms{Joshua} \snm{Kazdan},\ead[label=e1]{}}
\address{University of California Los Angeles \\
Los Angeles, California, USA}
\author{\fnms{Larissa} \snm{Kr\"{o}ll}},
\address{University of Waterloo\\
Waterloo, Ontario, Canada}
\author{\fnms{Sof\'{i}a} \snm{Mart\'{i}nez Alberga}},
\address{Purdue University\\
West Lafayette, Indiana, USA\space\space\space\space\space\space\space\space\space}
\author{\fnms{Oleksii} \snm{Melnyk}},
\address{University of Oxford\\
Oxford, UK}
\and
\author{\fnms{Alexander} \snm{Tenenbaum}}
\address{University of California Los Angeles\\
Los Angeles, California, USA}

\runauthor{Foley, Kazdan, Kr\"oll, Mart\'{i}nez Alberga, Melnyk, Tenenbaum}

\end{frontmatter}

\maketitle

\begin{abstract}
One of the major outstanding conjectures in the study of chromatic symmetric functions (CSF's) states that trees are uniquely determined by their CSF's.  Though verified on graphs of order up to twenty-nine, this result has been proved only for certain subclasses of trees.  Using the definition of the CSF that emerges via the Frobenius character map applied to $\mathbb{C}[S_n]$, we offer new algebraic proofs of several results about the CSF's of trees.  Additionally, we prove that a ``parent function" of the CSF defined in the group ring of $S_n$ can uniquely determine trees, providing further support for Stanley's conjecture.
\end{abstract}

\section{Introduction}\label{intro}

Stanley's 1994 paper \cite{Stan} introduced the chromatic symmetric function (CSF) and conjectured that non-isomorphic trees have distinct CSF's.   For the general question---can two different graphs have the same chromatic symmetric function?---the answer is ``yes,'' as evidenced by Figure 1 of \cite{Stan}. However, Stanley asked, can two different trees on the same number of vertices have the same chromatic symmetric function?  Stanley conjectured that the chromatic symmetric function (CSF) can distinguish trees.

The ability of the CSF to distinguish trees has been the subject of many papers, including \cite{Foug},\cite{MarMorWag},\cite{OrScott}, \cite{Tsuj}.  Partial proofs have demonstrated that caterpillars and spiders \cite{MarMorWag} exhibit unique chromatic symmetric functions. The conjecture has been computationally verified by Russell \cite{Russ} on trees with up to 25 vertices, and more recently by Heil and Ji \cite{HeiJi} on trees with up to 29 vertices.

Inspired by a group algebra approach first proposed by Stanley \cite{Enum} and substantially developed in recent years by Pawlowski  \cite{Paw},
we supply new proofs for a number of results.  The group algebra approach allows for  cohesive proofs of basic results concerning CSF's.  We obtain the CSF of a graph without cycles $G=(V,E)$ by applying the Frobenius character map to the function
\begin{equation}
    \mathcal{K}_{\mathcal{L},\pi}(G)= n! \prod_{(i,j)\in E} (1-(ij))
    \end{equation}
where $\mathcal{L}$ is a labeling of the vertices of $G$ and $\pi$ is an ordering of the edges of $G$.
Our main theorem demonstrates that Stanley's conjecture on trees can be reformulated as a question about equality of these $\mathcal{K}_\pi$ functions under conjugation. For this see 

\section{Background and Notation}

We begin by establishing basic notation, and refer the reader to West \cite{West} or Sagan \cite{Sag} for additional details on graphs or symmetric functions. Let $G = (V,E)$ be a graph where $V$ and $E$ denote the vertex and edge set respectively. For clarity, when $i,j \in V$ and $(i,j) \in E$,  we will call $i$ and $j$ $\emph{endpoints}$. If $V' \subset V $ and $E' \subset E $, where members of $E'$ have endpoints in $V'$,  then $H= (V', E') $ is called a $\emph{subgraph}$ of $G$. A graph $T=(V,E)$ is called a $\emph{tree}$ if it is connected and does not contain a cycle. If $|V|=n$, we will call $T$ a \emph{tree of order} $n$. A graph $F$ is called a $\emph{forest}$ if it is a disjoint union of trees. A vertex $v \in T $ is a $\emph{leaf}$ if the degree of $v$ (number of adjacent edges) is $1$.

There are several polynomials which can be identified with a graph. Of particular interest to us is the matching polynomial of a graph, $G=(V,E)$. To define the matching polynomial, we first need to define a $k$-matching on $G$, which is a subset $E'\subset E$ with $|E'| = k$ such that no two members of $E'$ share a vertex. The matching polynomial $\mu$(x) of $G$ is defined as 
\begin{equation}
\mu(x)\coloneqq \sum_{k=0}^{|V|}m_kx^k,
\end{equation}
where $m_k$ denotes the number of $k$-matchings.

A $\emph{coloring}$ of a graph $G=(V,E)$ is an assignment $ \kappa : V \rightarrow \mathbb{N} $.  We say that a coloring $\kappa$ is $\emph{proper}$ if  $\kappa(i) \neq \kappa(j)$, for all $(i,j) \in E$.

If $G$ has a vertex set $V= {v_1, v_2, v_3, ...., v_n}$, then the $\emph{chromatic symmetric function}$ \cite{Stan} of $G$  is defined as
\begin{equation}\label{CSF}
X_{G}\coloneqq \sum_{\kappa}  x_{\kappa(v_1)}  x_{\kappa(v_2)}  x_{\kappa(v_3)} ...   x_{\kappa(v_n)}
\end{equation}
where the sum is over all proper colorings $\kappa$ of $G$.

In (\ref{CSF}), the CSF is a polynomial written in the monomial basis for symmetric functions. A monomial symmetric function is defined by fixing a $\emph{partition}$ $\lambda$ of $n$, i.e. $\lambda = (\lambda_1, \lambda_2, \lambda_3, ..., \lambda_{k})\in \mathbb{N}^k$, such that $\lambda_1 \geq \lambda_2 \geq \lambda_3 \geq ... \geq \lambda_{k}$. Then the corresponding monomial symmetric function $m_\lambda$ is the sum over all distinct monomials of the form

\begin{equation}
m_\lambda(x)=\sum_\alpha
x_{1}^{\alpha_1}  x_{2}^{\alpha_2}  x_{3}^{\alpha_3} ... x_{n}^{\alpha_{k}}.
\end{equation}
where the sum is over all distinct permutations of $\alpha=(\alpha_1, \ldots, \alpha_k)$ of the entries of $\lambda=(\lambda_1, \lambda_2, \ldots, \lambda_k)$.

Note that these monomials form a basis for the ring of symmetric functions $\Lambda^n$. Another basis consists of the $\emph{power sum symmetric functions}$, which are of particular importance in this paper. The $n$th power sum symmetric function is defined as 

\begin{equation}
p_n \coloneqq \sum_{i \geq 1} x_i^n.
\end{equation}
For a partition $\lambda = (\lambda_1,...,\lambda_k)$ of $n$ we define
$p_\lambda = p_{\lambda_1}\cdot \ldots \cdot p_{\lambda_k}.$

These symmetric functions form a basis of the space $\Lambda^n$. For full details see Macdonald \cite{MacDon}, Sagan \cite{Sag} and Stanley \cite{Enum}.

An alternative definition of the chromatic symmetric function employs the \emph{symmetric group ring} $\mathbb{C}[S_n]$.  The algebra $\mathbb{C}[S_n]$ consists of all formal finite sums \[\sum z_i \sigma_i\] where the sum is over $z_i \in \mathbb{C}$ and $\sigma_i\in S_n$.  The sum and multiplication of two elements in $\mathbb{C}[S_n]$ are defined as follows
\begin{equation}
\left(\sum_{i \in I} c_i \sigma_i\right) + \left(\sum_{i \in I} z_i \sigma_i\right) =  \sum_{i \in I} (c_i + z_i) \sigma_i 
\end{equation}
\begin{equation}
\left(\sum_{i \in I} c_i \sigma_i\right) \left(\sum_{j \in J} z_j \sigma_j\right) = \sum_{i\in I, j\in J} (c_i  z_j) \sigma_i\sigma_j.
\end{equation}\\
The \emph{Frobenius character map} is defined by:

\begin{equation}
\textrm{ch}: \mathbb{C}[S_n] \rightarrow \Lambda.
\end{equation}
\begin{equation}\label{frob}
\sigma \mapsto\frac{1}{n!} p_{\textrm{cyc}(\sigma)},
\end{equation}
where cyc denotes the cycle type.

\begin{exmp}
The Frobenius character map takes $(12)(3)(4)$ to $\frac{1}{4!}p_{2,1,1}$.
\end{exmp}

We can construct the CSF of a graph $G= (V, E)$ with $|V|=n$ as follows.  Assign labels in $\{1,...,n\}$ to the vertices in $V$.  One can treat the edges $(i, j) \in E$ as transpositions of the form $(ij)$ in $S_n$.  For a labeling $\mathcal{L}$ of $G$ and an ordering $\pi$ of the edges of $G$, define
\begin{equation}\mathcal{K}_{\mathcal{L},\pi}(G)\coloneqq n! \prod_{(i,j)\in E} (1-(ij)).\end{equation} This is similar to the function called $\alpha_F$ in \cite{Paw}, except that we specify an ordering $\pi$ of the edges and a labeling $\mathcal{L}$ of the vertices, whereas Pawlowski does not. Note that the ordering is significant because $\mathbf{C}[S_n]$ is a non-commutative ring. For convenience, we will generally omit $\mathcal{L}$ and write $\mathcal{K}_{\pi}$ instead of $\mathcal{K}_{\mathcal{L},\pi}$ except when the labeling is relevant to our proofs.  When one applies the Frobenius character map ch, one recovers $\textrm{ch}(\mathcal{K}_\pi(G)) = X_G$, the CSF of $G$.  

Remarkably, any vertex labelling and ordering $\pi$ of the product yields the same CSF when the Frobenius map is applied to a tree by by \cite{Paw} Theorem 2.5.

\begin{exmp}
Consider the tree $T= (V, E)$ with vertex set $V= \{1,2,3, 4\}$ and edge set $\{(1,2), (1,3), (1,4)\}$.  For the ordering $\pi = ((1,2), (1,3), (1,4))$, one finds that \begin{align}\nonumber \mathcal{K}_{\pi}(T) &= 24(1-(12))(1-(13))(1-(14)) \\ \nonumber &= 24 (1)(2)(3)(4)-24 (13)(2)(4) - 24(12)(3)(4) + 24(132)(4) \\&- 24(14)(3)(2) + 24(143)(2) + 24(142)(3) - 24(1342).\end{align}  Applying ch, one obtains \[X_G=p_{(1,1,1,1)}+ 3p_{(3,1)} - 3p_{(2,1,1)} - p_{(4)}.\]  One can convert this into the monomial basis in order to get \[m_{(3,1)}+ m_{(2,2,1)} + m_{(1,1,1,1)}.\]
\end{exmp}

\section{Trees Through the $S_n$ Group Algebra}
In the following section, we re-derive some important facts about the CSF's of trees by harnessing the group algebra construction of the CSF.  Although the results in this section are known, they have typically been derived using ad-hoc combinatorial techniques.  The algebraic proofs given in this section present a more cohesive approach to proving facts about CSF's.
For this section, when talking about trees we will be considering trees of order $n$ if not otherwise stated. Furthermore we will neither specify $\pi$ nor the assignment of integers to the vertices in the functions $\mathcal{K}(G)$, since it will not affect our proofs.

The following two results were originally proved in equation (3.5), \cite{Morin} through entirely different methods involving degree sequences.

\begin{theorem} \cite{Morin}
The matching polynomial can be recovered from the chromatic symmetric function.
\end{theorem}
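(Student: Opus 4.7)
The plan is to expand the group-algebra element $\mathcal{K}_\pi(T)$ as a signed sum over edge subsets, apply the Frobenius character map termwise, and then read off each matching number $m_k$ as the coefficient of a distinguished power-sum symmetric function in $X_T$.

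Expanding the product gives
\[\mathcal{K}_\pi(T) \;=\; n! \sum_{S \subseteq E} (-1)^{|S|} \prod_{e \in S} \tau_e,\]
where $\tau_e = (ij)$ denotes the transposition attached to $e = (i,j)$ and the inner product is taken in the order induced by $\pi$. Since $\textrm{ch}(n!\,\sigma) = p_{\textrm{cyc}(\sigma)}$, applying Frobenius yields
\[X_T \;=\; \sum_{S \subseteq E} (-1)^{|S|}\, p_{\textrm{cyc}\bigl(\prod_{e \in S}\tau_e\bigr)}.\]

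The key intermediate step is to identify $\textrm{cyc}\bigl(\prod_{e \in S}\tau_e\bigr)$ with the partition of $n$ whose parts are the sizes of the connected components of the subforest $(V, S)$. Because transpositions with disjoint support commute, the product factors componentwise, so it suffices to treat a single subtree: a tree on $c$ vertices carries $c-1$ edges, and a classical fact (D\'enes's theorem on minimum transposition factorizations of a cycle, also implicit in the invariance assertion of Pawlowski's Theorem 2.5 quoted earlier) guarantees that the product of those $c-1$ transpositions in any order is a single $c$-cycle. Granting this, $\textrm{cyc}\bigl(\prod_{e \in S}\tau_e\bigr)$ is precisely the partition of component sizes of $(V,S)$.

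With the cycle-type lemma in hand, the partition $(2^k, 1^{n-2k})$ arises exactly when every component of $(V, S)$ has size $1$ or $2$, which happens precisely when $S$ is a $k$-matching; any such $S$ then contributes $(-1)^k$. Hence
\[\bigl[p_{(2^k, 1^{n-2k})}\bigr]\, X_T \;=\; (-1)^k m_k,\]
and the matching polynomial is recovered as
\[\mu(x) \;=\; \sum_{k = 0}^{\lfloor n/2 \rfloor} (-1)^k \bigl[p_{(2^k, 1^{n-2k})}\bigr]\, X_T \cdot x^k.\]

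The main obstacle is the cycle-type lemma: while classical, one must verify that it holds for \emph{every} ordering of the transpositions, not merely for a favorable one in which leaves can be peeled off inductively. This can be handled either by invoking D\'enes's theorem directly or by combining a leaf-peeling induction for one chosen ordering with Pawlowski's invariance result to extend the conclusion to arbitrary orderings. Once the lemma is secured, everything else is a routine reading of coefficients.
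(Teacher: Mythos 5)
Your proof is correct and follows essentially the same route as the paper: expand $\mathcal{K}(T)$ over edge subsets, apply the Frobenius map termwise, and read off $m_k$ as $(-1)^k$ times the coefficient of $p_{(2^k,1^{n-2k})}$. If anything you are more careful than the paper, which only verifies that each $k$-matching contributes $\pm p_{(2^k,1^{n-2k})}$ and leaves implicit the converse --- that no other edge subset can produce that cycle type --- a point your component-size lemma settles explicitly.
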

\begin{proof}
Consider $\mathcal{K}(G)$ for a graph $G = (V,E)$ without cycles.  Let $S\subset E$ be a set of cardinality $k\leq n$ containing edges that pairwise share no endpoints.  In other words, $S$ is an independent set of edges.  Viewing $S \subset S_n$ as a set of transpositions, the product
\begin{equation}
\prod_{(ij) \in S} (ij).
\end{equation}
has order $2$.  This follows because cycles containing no shared edges commute and do not interact.  Thus,
\begin{equation}\label{prod_ind}
\mathrm{ch} \left(n! \prod_{(ij) \in S} (ij) \right) =  p_{(2^k,1^{n-k}) }
\end{equation}
Since every subset of $V$ comprised of $k$ independent edges contributes a product of the form \eqref{prod_ind} to the CSF of $G$, the coefficient of $p_{(2^k,1^{n-k})}$, which we denote  $c_{(2^k,1^{n-k})}$, counts the number of independent sets of cardinality $k$. It follows that the matching polynomial $\mu(x)$ can be recovered by

\begin{equation}
\mu(x)=\sum_{k} |c_{(2^k,1^{n-k})}|x^k.
\end{equation}

As the sign of the coefficient depends on the parity of $k$, i.e. $\mathrm{sgn}(c_{(2^k,1^{n-k})})=(-1)^k$, it is important to include the absolute value.
\end{proof}

\begin{theorem} \cite{Morin} \label{subtree}
The number of connected sub-trees of a tree $T$ with order $k$ can be obtained through its chromatic symmetric function.
\end{theorem}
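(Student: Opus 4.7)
The plan is to mirror the style of the previous theorem's proof: expand $\mathcal{K}(T)$ over all edge subsets and isolate a coefficient that counts subtrees of order $k$. Specifically, applying distributivity to $\mathcal{K}(T) = n!\prod_{(i,j)\in E}(1-(ij))$ yields
\begin{equation}
\mathcal{K}(T) = n!\sum_{S\subseteq E} (-1)^{|S|} \prod_{(i,j)\in S}(ij),
\end{equation}
and applying the Frobenius character map then produces
\begin{equation}
X_T = \sum_{S\subseteq E} (-1)^{|S|} p_{\lambda(S)},
\end{equation}
where $\lambda(S)$ is the partition recording the sizes of the connected components of the spanning subgraph $(V,S)$.

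Next I would identify which subsets $S$ contribute to the coefficient of $p_{(k,1^{n-k})}$. Because $T$ is a tree, $(V,S)$ is a forest, and its connected components are themselves trees. The partition $(k,1^{n-k})$ arises exactly when $S$ consists of the $k-1$ edges of a connected subtree of $T$ on $k$ vertices, together with $n-k$ isolated vertices. Conversely, each subtree of $T$ of order $k$ yields one such $S$. Therefore the coefficient $c_{(k,1^{n-k})}$ of $p_{(k,1^{n-k})}$ in $X_T$ equals $(-1)^{k-1}$ times the number of connected subtrees of $T$ of order $k$, and this number is recovered as $|c_{(k,1^{n-k})}|$.

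The main obstacle is justifying that the cycle type of $\prod_{(i,j)\in S}(ij)$ really is $\lambda(S)$, independent of the ordering $\pi$. Since transpositions supported on disjoint components commute, it suffices to check this on a single component, reducing to the classical claim that the product of the $k-1$ edge-transpositions of a tree on $k$ vertices is a single $k$-cycle. I would prove this by induction on $k$: pick a leaf $j$ with incident edge $(i,j)$, move the transposition $(ij)$ to the appropriate end of the product (which only changes the result by conjugation of the remaining factors, preserving the tree structure of the remaining transpositions), apply the inductive hypothesis to obtain a $(k-1)$-cycle $\sigma$ on the other $k-1$ vertices, and observe that multiplying by $(ij)$ with $j\notin\mathrm{supp}(\sigma)$ extends $\sigma$ to a $k$-cycle.

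With that lemma in hand, the sign and absolute value argument runs exactly as in the matching polynomial proof above, and the conclusion follows. A minor secondary point to check is that no other edge subsets $S$ can yield cycle type $(k,1^{n-k})$ by coincidence, but this is immediate from the tree structure: a connected component of size $k$ in a subforest of $T$ is itself a tree and hence has exactly $k-1$ edges, so such $S$ is uniquely determined by the corresponding subtree.
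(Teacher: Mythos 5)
Your proposal is correct and follows essentially the same route as the paper's proof: expand $\mathcal{K}(T)$ over edge subsets, observe that the cycle type of $\prod_{(i,j)\in S}(ij)$ records the component sizes of the subgraph $(V,S)$, and read off the coefficient of $p_{(k,1^{n-k})}$ up to sign. The only difference is that you supply an explicit leaf-induction proof of the key fact that the edge-transpositions of a connected subtree multiply (in any order) to a single cycle, which the paper asserts without proof.
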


\begin{proof}
Let $T=(V,E)$ be a tree with  
\begin{equation}
\mathcal{K}(T) = n! \prod_{(ij)\in E} (1-(ij)).
\end{equation}
For any subset $E'\subset E$, $\mathcal{K}(T)$ contains the product  
\begin{equation}\label{connectedprod}
\prod_{(ij)\in E'} (ij).
\end{equation}

Because no edges appear more than once as transpositions in \eqref{connectedprod}, the product viewed in $S_n$ forms a single cycle of length $|E'|+1$ if and only if the edges in $E$ form a connected subgraph of $T$.

All cycles of a given length have the same sign in $\mathcal{K}(G)$. Hence the number of connected subgraphs of $T$ with size $k$ is the absolute value of the coefficient of $p_{(k, 1^{n-k})}$ in the CSF of $T$.
\end{proof}

\begin{cor} \cite{Morin} \label{leafy}
The number of leaves of a tree can be recovered from its chromatic symmetric function.
\end{cor}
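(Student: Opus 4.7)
The plan is to invoke Theorem \ref{subtree} at the extremal value $k = n-1$ and translate the resulting count into a count of leaves. By Theorem \ref{subtree}, the absolute value of the coefficient of $p_{(n-1,1)}$ in $X_T$ counts the connected subgraphs of $T$ of order $n-1$, so the whole task reduces to establishing a bijection between such subgraphs and the leaves of $T$.

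The bijection I would use is $v \mapsto T \setminus \{v\}$, where $v$ ranges over leaves of $T$ and $T\setminus\{v\}$ denotes $T$ with $v$ and its unique incident edge removed. First I would check that $T \setminus \{v\}$ is connected precisely when $v$ is a leaf: if $\deg(v) = 1$, deleting $v$ together with its pendant edge clearly preserves connectedness, whereas if $\deg(v) \geq 2$ then for any two neighbors $u_1, u_2$ of $v$ the unique $u_1$--$u_2$ path in $T$ must pass through $v$ (trees have unique paths), so $u_1$ and $u_2$ end up in different components of $T \setminus \{v\}$. Conversely, every order-$(n-1)$ connected subgraph of $T$ is determined by the unique vertex it omits, and by the argument just given that vertex must be a leaf, so the map is a bijection.

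Combining this bijection with Theorem \ref{subtree} at $k = n-1$ yields the corollary: the number of leaves of $T$ equals $|c_{(n-1,1)}|$, where $c_{(n-1,1)}$ is the coefficient of $p_{(n-1,1)}$ in the power-sum expansion of $X_T$. I do not foresee any real obstacle here; the argument is a short reduction, and the only subtlety is invoking uniqueness of paths in a tree to justify that removing a non-leaf disconnects it.
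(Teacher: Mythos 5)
Your proposal is correct and is essentially the paper's proof: the paper also deduces the corollary by identifying the number of leaves with the number of connected subgraphs of order $n-1$ counted by Theorem \ref{subtree}, stating this in one line where you spell out the leaf-removal bijection explicitly. The extra justification you give (unique paths in a tree forcing a removed non-leaf to disconnect it) is the right and only subtlety, and it is fine.
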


\begin{proof}
The number of leaves is given by the number of connected subgraphs of order $n-1$.
\end{proof}

We now explore some properties of the Frobenius character map that will help us prove a result about forests.  The Frobenius character map \eqref{frob} is not a ring homomorphism.  In fact, for non-disjoint trees $T_1$ and $T_2$ sharing a single vertex, 
$$\mathrm{ch}(\mathcal{K}(T_1))\mathrm{ch}(\mathcal{K}(T_2)) \neq \mathrm{ch}(\mathcal{K}(T_1)\mathcal{K}(T_2)).$$ 
This is because if $T_1$ and $T_2$ share a vertex $j$, every cycle in $\mathcal{K}(T_1)$ that contains the label $j$ increases in length in the product $\mathcal{K}(T_1)\mathcal{K}(T_2)$.  For instance, if $(ab)\in T_1$ and $(bc)\in T_2$, then we have that $4\textrm{ch}((ab))\textrm{ch}((bc)) = p_{(2,2)}$, whereas $4\textrm{ch}((ab)(bc)) = p_{(3)}$. However, for disconnected graphs $T_1$ and $T_2$, $\mathrm{ch}\mathcal{K}(T_1)\textrm{ch}(\mathcal{K}(T_2))= \textrm{ch}(\mathcal{K}(T_1)\mathcal{K}(T_2))$ since the cycles are disjoint. The combination of these two facts proves the following lemma:

\begin{lemma}\label{hommult}
The map $\mathrm{ch}$ has the property
\[\mathrm{ch}(\mathcal{K}(T_1))\mathrm{ch}(\mathcal{K}(T_2))=\mathrm{ch}(\mathcal{K}(T_1)\mathcal{K}(T_2))\] if and only if $T_1$ and $T_2$ are disjoint.  
\end{lemma}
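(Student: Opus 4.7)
The plan is to analyze both implications through the expansion
\[\mathcal{K}(T_i)=n_i!\sum_{S\subseteq E(T_i)}(-1)^{|S|}\pi_S,\]
where $\pi_S$ denotes the ordered product of the transpositions in $S$. The key structural fact I will use throughout is that the support of $\pi_S$ (the set of labels it moves) lies inside $V(T_i)$.

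For the direction $(\Leftarrow)$, assume $V(T_1)\cap V(T_2)=\emptyset$. Then for any $S\subseteq E(T_1)$ and $S'\subseteq E(T_2)$, the permutations $\pi_S$ and $\pi_{S'}$ have disjoint supports, so they commute and the cycle type of $\pi_S\pi_{S'}$ is the multiset union of the individual cycle types of $\pi_S$ and $\pi_{S'}$. Because $p_{\mu\cup\nu}=p_\mu p_\nu$ in $\Lambda$, applying $\mathrm{ch}$ term-by-term makes every contribution in the double expansion of $\mathcal{K}(T_1)\mathcal{K}(T_2)$ factor as a $T_1$-part times a $T_2$-part, and re-indexing the resulting double sum as a product of single sums recovers $\mathrm{ch}(\mathcal{K}(T_1))\mathrm{ch}(\mathcal{K}(T_2))$.

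For the direction $(\Rightarrow)$, I will argue the contrapositive by a degree count in $\Lambda$. Since $\mathrm{ch}(\mathcal{K}(T_i))=X_{T_i}$ is homogeneous of degree $n_i$, the left-hand side is homogeneous of degree $n_1+n_2$. In contrast, every permutation appearing in the expansion of $\mathcal{K}(T_1)\mathcal{K}(T_2)$ is supported inside $V(T_1)\cup V(T_2)$, so $\mathrm{ch}(\mathcal{K}(T_1)\mathcal{K}(T_2))$ is homogeneous of degree $N:=|V(T_1)\cup V(T_2)|$. If $T_1$ and $T_2$ share a vertex then $N<n_1+n_2$, placing the two sides in distinct graded components of $\Lambda$; since the identity summand of each $\mathcal{K}(T_i)$ contributes a nonzero leading power-sum term, neither side vanishes, and equality must fail.

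The step I expect to require the most care is the normalization bookkeeping: making sure the factors $n_1!$ and $n_2!$ in the two $\mathcal{K}(T_i)$ balance correctly against the $1/N!$ inside $\mathrm{ch}$. The example $(ab)(bc)\mapsto p_{(3)}$ from the paragraph preceding the lemma already signals that the effective $N$ on the product side is $|V(T_1)\cup V(T_2)|$ rather than $n_1+n_2$, and this cycle-joining phenomenon is precisely the mechanism that drives the failure of equality in the non-disjoint case.
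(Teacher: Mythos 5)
Your proof is correct, and its reverse direction (disjointness implies multiplicativity) is essentially the paper's own argument made explicit: disjoint supports commute, the cycle type of $\pi_S\pi_{S'}$ is the multiset union of the two cycle types, and $p_{\mu\cup\nu}=p_\mu p_\nu$ lets the double sum factor. Where you genuinely depart from the paper is the forward direction. The paper argues locally, observing that every cycle through a shared vertex $j$ lengthens in the product (as in its $p_{(2,2)}$ versus $p_{(3)}$ example); taken literally this still owes the reader a check that the altered cycle types do not cancel in aggregate and accidentally reproduce the same power-sum expansion. Your degree count avoids that entirely: the left side is nonzero and homogeneous of degree $n_1+n_2$ (nonzero because $X_{T_1}X_{T_2}\neq 0$ in the integral domain $\Lambda$), while the right side is homogeneous of degree $|V(T_1)\cup V(T_2)|$, which is strictly smaller exactly when the trees overlap, so the two sides lie in different graded components; note that only one side needs to be shown nonzero. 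This is a cleaner and more robust route. The normalization issue you flag is real but is a defect of the lemma as stated rather than of your argument: with $\mathcal{K}(T_i)$ scaled by $n_i!$ and $\mathrm{ch}$ on the product scaled by $1/N!$, the disjoint case gives equality only up to the scalar $n_1!\,n_2!/N!$. The paper passes over this silently; for its intended use in Theorem \ref{irred} a scalar factor is harmless, but a careful statement must fix compatible conventions for the ambient symmetric group on each side, as you observe.
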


A related result was proved by Tsujie \cite{Tsuj}: a graph has an irreducible chromatic symmetric function if and only if it is connected.  Again, he used entirely different machinery in his proof.

\begin{theorem}\label{irred} \cite{Tsuj}
Let F be a forest. Then F is a tree if and only if the chromatic symmetric function of F is irreducible.
\end{theorem}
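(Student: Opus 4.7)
The plan is to prove the two directions of the equivalence separately. The reverse direction falls out of Lemma \ref{hommult}, while the forward direction requires a short analysis of $X_T$ in the power-sum basis, viewed as a polynomial in the UFD $\Lambda=\mathbb{C}[p_1,p_2,\ldots]$ graded by $\deg p_i = i$.

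For the implication ``$X_F$ irreducible implies $F$ a tree,'' I prove the contrapositive. If $F$ is a forest but not a tree, write $F=T_1\sqcup T_2\sqcup\cdots\sqcup T_k$ with $k\geq 2$. Applying Lemma \ref{hommult} repeatedly (together with the fact that $\mathcal{K}(T_1)\mathcal{K}(T_2)$ for disjoint trees $T_1,T_2$ agrees with $\mathcal{K}(T_1\sqcup T_2)$ up to a scalar) gives $X_F=X_{T_1}X_{T_2}\cdots X_{T_k}$. Each $X_{T_i}$ is homogeneous of weighted degree $|V(T_i)|\geq 1$, hence a non-unit in $\Lambda$, so $X_F$ is reducible.

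For the forward direction, let $T$ be a tree of order $n$. Expanding $\mathcal{K}(T)=n!\prod_{(i,j)\in E}(1-(ij))$ and applying $\mathrm{ch}$ yields
$$X_T=\sum_{S\subseteq E}(-1)^{|S|}\,p_{\lambda(S)},$$
where $\lambda(S)$ is the partition recording the connected-component sizes of the spanning subgraph $(V,S)$; this identification uses the standard observation that the transpositions along the $k-1$ edges of a tree on $k$ vertices multiply to a single $k$-cycle and that disjoint components commute. Because $T$ is itself a tree, the partition $(n)$ arises exactly once in the sum, namely from $S=E$, so $p_n$ occurs in $X_T$ with coefficient $(-1)^{n-1}\neq 0$.

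Now suppose $X_T=fg$ in $\Lambda$. Since $\Lambda$ is a graded integral domain and $X_T$ is homogeneous of weighted degree $n$, I may take $f,g$ to be homogeneous of weighted degrees $A,B$ with $A+B=n$. If both $A\geq 1$ and $B\geq 1$, then $A,B\leq n-1$, so $f,g\in\mathbb{C}[p_1,\ldots,p_{n-1}]$ and the product $fg$ contains no $p_n$, contradicting the nonzero coefficient of $p_n$ in $X_T$. Hence $A=0$ or $B=0$, one of $f,g$ is a nonzero constant, and $X_T$ is irreducible. The main obstacle I expect is pinning down the power-sum expansion cleanly, in particular the cycle-type identification $\mathrm{cyc}\bigl(\prod_{e\in S}e\bigr)=\lambda(S)$ for every $S\subseteq E$; once that is in hand, the weighted-degree argument closes the irreducibility claim almost automatically.
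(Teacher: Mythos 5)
Your proof is correct and follows essentially the same route as the paper: Lemma \ref{hommult} gives reducibility for disconnected forests, and irreducibility for a tree comes from the nonvanishing coefficient of $p_n$ together with the algebraic independence of the power sums. Your homogeneity/weighted-degree argument is simply a cleaner and more complete rendering of the paper's terser claim that $p_n$ cannot appear in a product of two non-constant factors of lower degree.
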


\begin{proof}
If $F$ is connected of order $n$ then its CSF has a non-zero coefficient of $p_n$ by Theorem \ref{subtree}. 
Now assume the chromatic symmetric function of $F$ is reducible so that for symmetric functions $\alpha, \beta$,
$$X_F=\alpha \beta.$$ 
Setting the coefficient of $p_n$ equal to $\alpha\neq 0$, the following equation can be obtained
$$(\alpha\beta-(X_F-\alpha p_n))/\alpha= p_n.$$  
This implies that $p_n$ can be written as an algebraic combination of other $p$-symmetric function basis elements, a contradiction.
Due to the fact that the power sum symmetric functions form an algebraically independent basis for $\Lambda$, $p_n$ cannot be expressed as a product of lower-order power series symmetric functions.

In the other direction, assume the forest is disconnected, with connected components $T_1, T_2,...,T_n$.  Then, by Lemma \ref{hommult}, \[\prod_1^n \mathrm{ch}(\mathcal{K}(T_i))=\mathrm{ch}(\prod_1^n \mathcal{K}(T_i)).\]  Therefore the chromatic symmetric function is reducible as $X_{T_1}X_{T_2}=X_{F}$.

\end{proof}

Using Theorem \ref{irred}, Corollary \ref{leafy} can easily be extended to forests by considering the irreducible factors of the CSF of a forest. 
\begin{cor} \cite{Tsuj}
The number of leaves in a forest can be recovered from its chromatic symmetric function. 
\end{cor}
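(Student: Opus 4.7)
The plan is to reduce the forest case to the tree case using unique factorization in the ring of symmetric functions. The ring $\Lambda$ is the polynomial ring $\mathbb{C}[p_1,p_2,\ldots]$ on algebraically independent generators, hence a UFD: every element factors uniquely into irreducibles up to nonzero scalar multiples and reordering.

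First I would show, by iterating Lemma \ref{hommult} over the connected components $T_1,\ldots,T_k$ of $F$, that $X_F=X_{T_1}\cdots X_{T_k}$. Theorem \ref{irred} says each $X_{T_i}$ is irreducible, so this is the unique irreducible factorization of $X_F$. Therefore, starting only from $X_F$, factoring it in $\Lambda$ recovers each $X_{T_i}$ up to a nonzero scalar and reordering.

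Next I would normalize the factors. Each $X_{T_i}$ is homogeneous of degree $n_i=|V(T_i)|$ in the grading $\deg p_m=m$, so the degree of each irreducible factor already determines the order of the corresponding tree. The proof of Theorem \ref{subtree} shows that the coefficient of $p_{n_i}$ in $X_{T_i}$ is $(-1)^{n_i-1}$, since the only connected subgraph of $T_i$ of order $n_i$ is $T_i$ itself. Rescaling each irreducible factor so that the coefficient of $p_{n_i}$ takes this value pins it down as $X_{T_i}$ itself. Then Corollary \ref{leafy} applied to each $X_{T_i}$ returns the number of leaves of $T_i$, and summing over $i$ gives the total number of leaves in $F$.

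The main obstacle is the scalar ambiguity of UFD factorization: without fixing a normalization, the coefficients of an arbitrary irreducible factor equal unknown multiples of the leaf-counting coefficients, and one cannot directly read off the number of leaves from them. Using the top-cycle coefficient $p_{n_i}$ to normalize removes this ambiguity cleanly, and the rest follows from results already established.
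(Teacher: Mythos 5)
Your proof is correct and follows essentially the same route as the paper, which simply notes that the corollary follows from Theorem \ref{irred} by considering the irreducible factors of $X_F$. You additionally fill in the scalar-normalization detail (fixing each factor via its $p_{n_i}$ coefficient) that the paper leaves implicit, which is a worthwhile clarification but not a different approach.
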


\section{New Tree Developments}
\label{newtreedevelopments}
Inspired by Stanley's conjecture about trees \cite{Stan}, we now investigate the uniqueness of the product $\mathcal{K}_\pi(T)$. The following new results allow for a rephrasing of the original conjecture.

\begin{theorem}\label{exeq}
Let $T_1 = (V_1,E_1), T_2=(V_2,E_2)$ be trees.  Since the order of a graph can be recovered from its CSF, we suppose that $T_1$ and $T_2$ both have vertices labeled $1,...,n$. If there exist orderings $\pi_1$ and $\pi_2$ of the edges $E_1$ and $E_2$ respectively such that $$\mathcal{K}_{\pi_1}(T_1)=\mathcal{K}_{\pi_2}(T_2),$$ then $T_1=T_2$.  Furthermore, if two edges $(ij)$ and $(jk)$ share a common endpoint $j$, and $(ij)$ precedes $(jk)$ in $\pi_1$, then $(ij)$ also precedes $(jk)$ in $\pi_2$.    
\end{theorem}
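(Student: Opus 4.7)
The plan is to read off both the edge set and the relative order of incident edges directly from the coefficients of specific permutations in the group-algebra expansion
\[\mathcal{K}_\pi(T) = n! \sum_{S \subseteq E} (-1)^{|S|} \prod_{e \in S} e,\]
where each inner product is taken in the order prescribed by $\pi$. The central structural fact, already exploited in the proof of Theorem \ref{subtree}, is that for $S \subseteq E(T)$ the ordered product of the corresponding transpositions has cycle type determined by the edge-component sizes of $S$ viewed as a subgraph of $T$: a connected component with $k$ edges contributes a cycle of length $k+1$ to the product, and distinct components act on disjoint vertex sets and therefore commute.

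For the first assertion, examine the coefficient of a transposition $(ij) \in S_n$ in $\mathcal{K}_\pi(T)/n!$. The cycle type $(2,1^{n-2})$ forces the subgraph carried by $S$ to consist of a single component with a single edge, so $|S|=1$ and that edge must be $(ij)$ itself. Consequently the coefficient equals $-1$ when $(ij) \in E$ and $0$ otherwise, independent of $\pi$. Matching the coefficients of transpositions in $\mathcal{K}_{\pi_1}(T_1) = \mathcal{K}_{\pi_2}(T_2)$ therefore yields $E_1 = E_2$, and hence $T_1 = T_2$.

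For the second assertion, apply the same idea to a 3-cycle on $\{i,j,k\}$. Cycle type $(3,1^{n-3})$ forces $S$ to be a pair of edges sharing a common vertex with total vertex set $\{i,j,k\}$. The three candidate adjacent pairs are $\{(ij),(jk)\}$, $\{(ij),(ik)\}$, and $\{(jk),(ik)\}$, but because $T$ is triangle-free at most one of these pairs lies in $E$. A direct calculation in $S_n$ shows that for any such pair the two possible orderings produce the two distinct 3-cycles $(i\,j\,k)$ and $(i\,k\,j)$. Letting $\sigma$ denote the 3-cycle produced by $\{(ij),(jk)\}$ with $(ij)$ preceding $(jk)$, the coefficient of $\sigma$ in $\mathcal{K}_\pi(T)/n!$ equals $+1$ precisely when $\{(ij),(jk)\} \subseteq E$ and $(ij)$ precedes $(jk)$ in $\pi$, and equals $0$ otherwise. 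Applying this to both $\pi_1$ and $\pi_2$ on $T_1 = T_2$, and observing (by the triangle-free constraint) that no other pair of edges in $T$ can also produce $\sigma$, forces $(ij)$ to precede $(jk)$ in $\pi_2$ as well.

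The main obstacle is the last case check: verifying that each of the three candidate adjacent pairs on $\{i,j,k\}$, under each of its two orderings, yields precisely one of the two 3-cycles $(i\,j\,k)$ or $(i\,k\,j)$. This is a direct but careful calculation in $S_n$ and requires no new combinatorial input beyond the component-to-cycle correspondence already invoked in the proof of Theorem \ref{subtree}.
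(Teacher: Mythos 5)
Your proof is correct and follows essentially the same route as the paper's: expand $\mathcal{K}_\pi(T)$ in the group algebra, read off the edge set from the coefficients of transpositions, and read off the relative order of adjacent edges from the coefficients of $3$-cycles. If anything, your version is slightly more careful than the paper's, since you justify why only single-edge subsets contribute to a transposition's coefficient and why (by acyclicity) at most one adjacent pair on $\{i,j,k\}$ can contribute to a given $3$-cycle, points the paper passes over quickly.
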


\begin{proof}
Let $T_1$ and $T_2$ be two trees on $n$ vertices.  Let
 \begin{equation}
 \mathcal{K}_{\pi_1}(T_1)= n! \prod_{\tau_i \in E_1} (1-\tau_i)
 \end{equation}
 and 
\begin{equation}
\mathcal{K}_{\pi_2}(T_2) = n! \prod_{\sigma_i \in E_2} (1-\sigma_i)
\end{equation}
where $\tau_i$ and $\sigma_i$ are the transpositions that are given by the edges of $T_1$ and $T_2$.  

Then, equating $\mathcal{K}_{\pi_1}(T_1) = \mathcal{K}_{\pi_2}(T_2)$, the following equalities arise by comparing cycles of same order:

\begin{align} 1&=1\\
\sum_{\tau_i \in E_1} \tau_i &= \sum_{\sigma_i \in E_2} \sigma_i\\
\sum_{\substack{\tau_i\neq \tau_j \in E_1\\ i<j \textrm{ in }\pi_1}} \tau_i \tau_j &= \sum_{\substack{ \sigma_i, \sigma_j \in E_2  \\ i<j \textrm{ in } \pi_2}} \sigma_i\sigma_j\label{ordering}        
\end{align}

In addition, there are $n-2$ other relationships, one for each cycle length $\leq n$.

 If the following equation holds
 \begin{equation}
     \sum_{\tau_i \in E_1} \tau_i= \sum_{\sigma_i \in E_2} \sigma_i
     \end{equation}
     it is clear that the sets $\{\tau_{i\in I}\}$ and $\{\sigma_{i\in I}\}$ contain the same transpositions, which implies that $T_1 = T_2$.

 We now discuss the information that can be gleaned about the orderings $\pi_1, \pi_2$ from \eqref{ordering}.  Note that $\mathbb{C}[S_n]$ is a non-commutative ring in which the only elements that commute are disjoint cycles.  Suppose that $\mathcal{K}_{\pi_1}(T_1)$ contains the $3$-cycle $\tau_i\tau_j$, where the two transpositions are non-disjoint. Then $\tau_i$ comes before $\tau_j$ in $\mathcal{K}_{\pi_1}(T_1)$. Thus, we have orderings of non-disjoint edges in the products $\pi_1, \pi_2$ that agree.
\end{proof}

Note the converse $\mathcal{K}_{\pi_1}(T_1)\neq \mathcal{K}_{\pi_2}(T_2)$ does not imply that $T_1\neq T_2$.  The path on three vertices, $P_3$, has two orderings of edges that produce different elements of the symmetric group ring: $$(1-(12))(1-(23))\neq (1-(23))(1-(12)).$$ The condition that $\mathcal{K}_{\pi_1}(T_1) = \mathcal{K}_{\pi_2}(T_2)$ is too strong to be practical.

However, the result in Theorem \ref{exeq} can be recast in a more useful form.  Define \[S_G\coloneqq\{\mathcal{K}_{\mathcal{L},\pi}(G)| \mathcal{L}\textrm{ is a labeling of V}, \pi \textrm{ is a permutation of }E\}.\]  By Theorem \ref{exeq}, the following result about $S_G$ also holds.  

\begin{theorem}\label{stronger}
Let $T_1 = (V_1,E_1),T_2= (V_2,E_2)$ be trees.
$S_{T_1}\cap S_{T_2}\neq \emptyset$ if and only if $T_1=T_2$.  
\end{theorem}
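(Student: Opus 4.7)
My plan is to read Theorem \ref{stronger} as a corollary-style consequence of Theorem \ref{exeq}, so the work is almost entirely unfolding the definition of $S_G$ on each side of the ``if and only if''.

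For the easy direction, assume $T_1=T_2$. Then $S_{T_1}=S_{T_2}$ as sets by definition, so the intersection equals $S_{T_1}$, which is manifestly nonempty because one can exhibit at least one labeling $\mathcal{L}$ of $V_1$ by $\{1,\dots,n\}$ and one ordering $\pi$ of $E_1$, making $\mathcal{K}_{\mathcal{L},\pi}(T_1)$ a member of both sets. (If one wants to allow $T_1,T_2$ to be merely isomorphic as abstract trees, choose an isomorphism $\phi\colon T_1\to T_2$, any labeling $\mathcal{L}_1$ of $V_1$, and set $\mathcal{L}_2=\mathcal{L}_1\circ\phi^{-1}$ together with the induced edge ordering; then by construction the two transposition-products in $\mathbb{C}[S_n]$ are literally identical.)

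For the forward direction, suppose $\alpha\in S_{T_1}\cap S_{T_2}$. Then by definition of the $S_G$ sets there exist labelings $\mathcal{L}_1\colon V_1\to\{1,\dots,n\}$ and $\mathcal{L}_2\colon V_2\to\{1,\dots,n\}$ together with edge orderings $\pi_1,\pi_2$ such that
\[
\mathcal{K}_{\mathcal{L}_1,\pi_1}(T_1) \;=\; \alpha \;=\; \mathcal{K}_{\mathcal{L}_2,\pi_2}(T_2)
\]
as elements of $\mathbb{C}[S_n]$. Via $\mathcal{L}_1$ and $\mathcal{L}_2$ both trees now sit on the common vertex label set $\{1,\dots,n\}$, so the hypothesis of Theorem \ref{exeq} is satisfied. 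That theorem then forces the two underlying sets of transpositions, and hence the two labeled edge sets, to coincide; equivalently, $\mathcal{L}_2^{-1}\circ\mathcal{L}_1\colon V_1\to V_2$ is a graph isomorphism, giving $T_1=T_2$.

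The only point that requires any care is the one anticipated obstacle: making sure the elements of $S_{T_1}$ and $S_{T_2}$ can be compared ``in the same $S_n$''. This is automatic once one observes that every element of $S_G$ lives in $\mathbb{C}[S_n]$ for $n=|V(G)|$, and that Corollary \ref{leafy} (or more basically the fact that $n$ is recovered from the CSF, already invoked in Theorem \ref{exeq}) forces $|V_1|=|V_2|$ whenever the intersection is nonempty. After that observation the argument reduces cleanly to the application of Theorem \ref{exeq} above.
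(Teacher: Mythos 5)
Your proposal is correct and matches the paper's approach: the paper derives this theorem directly from Theorem \ref{exeq} by unfolding the definition of $S_G$, exactly as you do. Your extra care about placing both trees in the same $\mathbb{C}[S_n]$ is a reasonable (and harmless) elaboration of what the paper leaves implicit.
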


The labeling of the vertices in the trees is essential: different labelings $\mathcal{L}_1$ and $\mathcal{L}_2$ of $V_1$ cause $\mathcal{K}_{\mathcal{L}_1, \pi_1}(T_1) \neq \mathcal{K}_{\mathcal{L}_2, \pi_2}(T_1)$ for $\pi_1, \pi_2$ orderings of $E_1$ under the labelings $\mathcal{L}_1, \mathcal{L}_2$ respectively.  For example, if we consider $P_3$ once again, we find that the two labelings of the vertices $1,2,3$ and $2,1,3$ produce disjoint sets of $\mathcal{K}_\pi$ functions.  The first labeling results in \[\{6(1-(12))(1-(23)),6(1-(23))(1-(12))\}\] whereas the second produces \[\{6(1-(12))(1-(13)), 6(1-(13))(1-(12))\}.\]  Both of the elements in the second set contain a $(13)$ term that the elements in the first set do not. In $\mathbb{C}[S_n]$, a relabeling corresponds to conjugation by an element $\sigma \in S_n$.

\begin{prop}
Conjugation of $\mathcal{K}_{\pi}(T)$ for $T= (V,E)$ a tree by a member $\sigma\in S_n$ yields $\mathcal{K}_{\sigma \pi \sigma^{-1}}(T')$ where $T'$ is a relabeling of $T$ and $\sigma \pi \sigma^{-1} = (\sigma e_1\sigma^{-1}, \sigma e_2 \sigma^{-1},...)$ with $e_i \in E$.
\end{prop}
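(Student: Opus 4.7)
The plan is to compute $\sigma \mathcal{K}_\pi(T) \sigma^{-1}$ directly, exploiting two algebraic facts: that conjugation by a fixed $\sigma \in S_n$ is an inner automorphism of the ring $\mathbb{C}[S_n]$, and that conjugating a transposition by $\sigma$ relabels its entries. Write $\pi = (e_1, e_2, \ldots, e_{n-1})$ where each $e_k = (i_k\, j_k)$ is the transposition associated with an edge of $T$, so that
\begin{equation}
\mathcal{K}_\pi(T) = n!\,(1-e_1)(1-e_2)\cdots(1-e_{n-1}).
\end{equation}

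First I would conjugate both sides by $\sigma$. Since $\sigma \sigma^{-1} = 1$, I can insert $\sigma^{-1}\sigma$ between every pair of consecutive factors, and since conjugation is additive on $\mathbb{C}[S_n]$, I get
\begin{equation}
\sigma\, \mathcal{K}_\pi(T)\, \sigma^{-1} = n!\prod_{k=1}^{n-1} \bigl(1 - \sigma e_k \sigma^{-1}\bigr).
\end{equation}
Second, I would invoke the standard identity $\sigma (i\, j)\sigma^{-1} = (\sigma(i)\, \sigma(j))$, which shows that each factor becomes $1 - (\sigma(i_k)\,\sigma(j_k))$.

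Third, I would interpret the right-hand side as a $\mathcal{K}$-function of a relabeled tree. Define $T' = (V, E')$ to be the graph obtained from $T$ by the relabeling $\mathcal{L}' = \sigma \circ \mathcal{L}$, i.e. the vertex formerly labeled $v$ is relabeled $\sigma(v)$, so the edge set becomes $E' = \{(\sigma(i)\,\sigma(j)) : (i\,j) \in E\}$. Since $\sigma$ is a bijection on $V$, $T'$ is again a tree isomorphic to $T$. The ordering $\sigma \pi \sigma^{-1} := (\sigma e_1 \sigma^{-1}, \ldots, \sigma e_{n-1}\sigma^{-1})$ is then a valid ordering of $E'$, and by construction
\begin{equation}
n!\prod_{k=1}^{n-1}\bigl(1 - \sigma e_k \sigma^{-1}\bigr) = \mathcal{K}_{\sigma\pi\sigma^{-1}}(T'),
\end{equation}
which is what we wanted.

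There is no real obstacle here; the argument is essentially bookkeeping, and the only substantive input is the transposition conjugation identity together with the ring-automorphism property of conjugation. The one point that deserves care is verifying that the relabeling $\mathcal{L}'$ gives a well-defined tree on the same vertex set $\{1,\ldots,n\}$ and that $\sigma\pi\sigma^{-1}$ is a genuine permutation of its edge set; both follow immediately from $\sigma$ being a bijection. Thus conjugation in $\mathbb{C}[S_n]$ corresponds exactly to relabeling vertices at the level of the $\mathcal{K}$-functions, justifying the remark preceding the proposition that different labelings are related by conjugation.
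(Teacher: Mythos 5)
Your proposal is correct and follows essentially the same argument as the paper: insert $\sigma^{-1}\sigma$ between consecutive factors so that conjugation distributes over the product, then apply $\sigma(i\,j)\sigma^{-1}=(\sigma(i)\,\sigma(j))$ to identify the result as the $\mathcal{K}$-function of the relabeled tree. You merely spell out the bookkeeping (well-definedness of $T'$ and of the ordering $\sigma\pi\sigma^{-1}$) a bit more explicitly than the paper does.
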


\begin{proof}

Assume 
\begin{equation}
\sigma n!\prod_{(ij)\in E} (1-(ij)) \sigma^{-1} = n!\prod_{(lm)\in E'} (1-(lm)).
\end{equation}

Conjugation represents a graph relabeling because
\begin{equation}
\sigma \prod_{(ij)\in E} (1-(ij))\sigma^{-1}= \prod_{(ij)\in E} \sigma (1-(ij))\sigma^{-1} = \prod_{(ij) \in E} (1-\sigma(ij)\sigma^{-1}).
\end{equation}
\end{proof}
\begin{exmp}
Using $P_3$ once again, consider the labeling in which the vertices are assigned 2---1---3 versus 1---2---3.  Conjugating by $(12)$, the edge $(13)$ is sent to $(12)(13)(12) = (23)$ and the edge $(12)$ is fixed.  The new edges agree with those of the graph under the second labeling. 
\end{exmp}

Therefore, by Theorem \ref{exeq} the equality of $\mathcal{K}_{\pi_1}(T_1)$ and $\mathcal{K}_{\pi_2}(T_2)$ under conjugation implies $T_1$ = $T_2$.

Now, we can make an even stronger statement:

\begin{theorem}
Given two trees $T_1$ and $T_2$, if there exists $\mathcal{K}_{\pi_1}(T_1)\in S_{T_1}$ and $\mathcal{K}_{\pi_2}(T_2)\in S_{T_2}$ along with some element $\sigma \in S_n$ such that \[\sigma\mathcal{K}_{\pi_1}(T_1)\sigma^{-1}=\mathcal{K}_{\pi_2}(T_2)\] then $T_1=T_2$.  
\end{theorem}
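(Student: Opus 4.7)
The plan is to deduce this statement as an essentially immediate corollary of the proposition on conjugation together with Theorem \ref{exeq}. The conjugation proposition tells us that $\sigma \mathcal{K}_{\pi_1}(T_1) \sigma^{-1}$ equals $\mathcal{K}_{\sigma \pi_1 \sigma^{-1}}(T_1')$, where $T_1'$ is a tree obtained from $T_1$ by relabeling its vertices via $\sigma$. In particular, $T_1'$ is isomorphic to $T_1$ as a graph.

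First I would substitute this rewrite into the hypothesis to obtain
\begin{equation}
\mathcal{K}_{\sigma \pi_1 \sigma^{-1}}(T_1') = \mathcal{K}_{\pi_2}(T_2).
\end{equation}
Both sides are now of the form $\mathcal{K}_{\pi}(\cdot)$ for trees on the vertex set $\{1,\ldots,n\}$, with $\sigma \pi_1 \sigma^{-1}$ a legitimate ordering of the edges of $T_1'$. So the hypothesis of Theorem \ref{exeq} is met for the pair $(T_1', T_2)$, and applying that theorem gives $T_1' = T_2$ as labeled trees.

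Finally, I would combine $T_1 \cong T_1'$ (as unlabeled trees, since relabeling does not change the isomorphism type) with $T_1' = T_2$ to conclude $T_1 = T_2$, which is the desired equality of trees in the sense relevant to Stanley's conjecture.

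The content of the argument is entirely bookkeeping: the real mathematics was done in Theorem \ref{exeq} and in the proposition expressing conjugation as relabeling. The only point that requires any care is keeping straight the distinction between equality as labeled objects (which is what Theorem \ref{exeq} yields) and equality as abstract trees (which is what the statement asserts), and making explicit that conjugation by $\sigma$ is precisely the operation that transports a labeling of $T_1$ to a labeling of $T_1'$ compatible with the ordering $\sigma \pi_1 \sigma^{-1}$. I do not foresee any genuine obstacle.
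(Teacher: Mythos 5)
Your proposal is correct and follows exactly the route the paper intends: the paper gives no separate proof of this theorem, but the immediately preceding Proposition (conjugation is relabeling) and the remark "by Theorem \ref{exeq} the equality of $\mathcal{K}_{\pi_1}(T_1)$ and $\mathcal{K}_{\pi_2}(T_2)$ under conjugation implies $T_1=T_2$" constitute precisely your argument. Your explicit attention to the distinction between equality of labeled trees and equality up to isomorphism is a point the paper glosses over, and is worth keeping.
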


Even though this is still insufficient to show Stanley's conjecture that trees are uniquely determined by their CSF's, the following statement, if proved, would imply the conjecture:

\begin{obs}\label{reform}
The chromatic symmetric functions of two trees $X_{T_1}$ and $X_{T_2}$ can be equal if and only if for some  $\sigma \in S_n$ and orderings $\pi_1, \pi_2$
\[\sigma \mathcal{K}_{\pi_1}(T_1)\sigma^{-1}=\mathcal{K}_{\pi_2}(T_2).\]   
\end{obs}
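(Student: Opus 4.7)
The plan is to split the biconditional into its two implications, which sit at opposite ends of the difficulty spectrum. The ``if'' direction (existence of $\sigma, \pi_1, \pi_2$ realizing the conjugation equality implies $X_{T_1}=X_{T_2}$) follows directly from the conjugation-invariance of the Frobenius character map, while the ``only if'' direction is essentially Stanley's conjecture itself, as becomes clear once one pairs it with the theorem immediately preceding Reformulation \ref{reform}.

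For the forward direction, I would first record that $\mathrm{ch}$ is invariant under conjugation: for any $\sigma,\tau\in S_n$, the elements $\tau$ and $\sigma\tau\sigma^{-1}$ share a cycle type, so $\mathrm{ch}(\tau)=\mathrm{ch}(\sigma\tau\sigma^{-1})$, and by $\mathbb{C}$-linearity this extends to all of $\mathbb{C}[S_n]$. Combined with the fact (from \cite{Paw}, Theorem 2.5, as cited in the text) that $\mathrm{ch}(\mathcal{K}_\pi(T))=X_T$ for every labeling and ordering, the hypothesized equality $\sigma\mathcal{K}_{\pi_1}(T_1)\sigma^{-1}=\mathcal{K}_{\pi_2}(T_2)$ immediately yields
\[
X_{T_1}=\mathrm{ch}(\mathcal{K}_{\pi_1}(T_1))=\mathrm{ch}\bigl(\sigma\mathcal{K}_{\pi_1}(T_1)\sigma^{-1}\bigr)=\mathrm{ch}(\mathcal{K}_{\pi_2}(T_2))=X_{T_2}.
\]

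For the reverse direction, assume $X_{T_1}=X_{T_2}$. The theorem just above the reformulation already tells us that the existence of a conjugator $\sigma$ forces $T_1=T_2$, so proving this direction is literally equivalent to Stanley's conjecture. The attack I would attempt is constructive and inductive on the number of edges. First, I would harvest as many CSF-derived invariants as possible to pin down the shape of $T_1$ and $T_2$: the matching polynomial, the counts of connected subtrees of each order via Theorem \ref{subtree}, and the number of leaves via Corollary \ref{leafy}. Second, I would strip a leaf $\ell$ from $T_1$ to obtain $T_1'$, argue that $X_{T_1'}$ is still determined by $X_{T_1}$ together with the leaf data, apply the inductive hypothesis to produce a conjugator $\sigma'$ and orderings $\pi_1',\pi_2'$ matching $T_1'$ with some leaf-removed $T_2'$, and finally extend $\sigma'$ to an element $\sigma\in S_n$ that places the deleted edge in the correct position, appending it to both $\pi_1',\pi_2'$.

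The obstacle --- which is Stanley's conjecture in disguise --- is the last step of this induction: the CSF does not label vertices, so there may be several inequivalent candidates in $T_2$ for where the stripped leaf's neighbor should sit, and no obvious mechanism distinguishes them. One could try to sidestep this by proving a rigidity statement about the set $S_T$ of Theorem \ref{stronger}: namely, that $S_T$ is closed under the $S_n$-conjugation action on $\mathbb{C}[S_n]$ and that the resulting orbit is determined by, and determines, the isomorphism class of $T$. Establishing such rigidity purely algebraically --- without invoking tree-reconstruction arguments --- is exactly where I expect any attempted proof to stall, because it would amount to upgrading the combinatorial invariants listed above into a complete invariant of trees.
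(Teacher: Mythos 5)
The paper offers no proof of this statement: it is explicitly framed as a conjecture (``the following statement, if proved, would imply the conjecture''), so there is no argument of the paper's to compare yours against. Your handling of the ``if'' direction is correct and is exactly the content the paper leaves implicit: $\mathrm{ch}$ is a class function (conjugation preserves cycle type, and $\mathrm{ch}$ is extended $\mathbb{C}$-linearly), so the conjugation equality forces $X_{T_1}=X_{T_2}$; alternatively, the theorem immediately preceding the reformulation already gives $T_1=T_2$, whence the CSFs agree trivially. Your diagnosis of the ``only if'' direction is also accurate --- given the Proposition that conjugation realizes relabeling, that direction together with the preceding theorem is equivalent to Stanley's conjecture, so no complete proof should be expected here. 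Your sketched leaf-stripping induction stalls precisely where you say it does (the CSF does not distinguish among the candidate attachment sites for the stripped leaf), and recognizing that obstruction as Stanley's conjecture in disguise, rather than papering over it, is the correct assessment. In short: you have proved everything that is provable in this statement with the tools of the paper, and correctly identified the remainder as open.
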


\section{Acknowledgements}

The authors thank Kaiming Zhao for suggesting the proof technique in Theorem 4 and Niven Achenjang for his contributions in the revision process. This work was supported by the Canadian Tri-Council Research Support Fund. The author Ang\`ele Foley was supported by an NSERC Discovery Grant and the authors Joshua Kazdan and Sof\'{i}a Mart\'{i}nez Alberga are supported by the NSF GRFP: Award \# 1650114. This research was conducted at the Fields Institute, Toronto, Canada as part of the 2018 Fields Undergraduate Summer Research Program and was funded by that program.

\addcontentsline{toc}{section}{References}

\end{document}